\theoremstyle{plain}
\title[]{On braided, banded surfaces and ribbon obstructions}
\author{J. Elisenda Grigsby}
\thanks{This work was partially supported by NSF CAREER award DMS-1151671 and a grant from the Simons Foundation (\#396324 to J. Elisenda Grigsby).}
\address{Boston College; Department of Mathematics; 522 Maloney Hall; Chestnut Hill, MA 02467}
\email{grigsbyj@bc.edu}
\theoremstyle{}
\newtheorem{theorem}{Theorem}
\newtheorem{lemma}[theorem]{Lemma}
\newtheorem{proposition}[theorem]{Proposition}
\newtheorem{corollary}[theorem]{Corollary}
\newtheorem{conjecture}[theorem]{Conjecture}
\newtheorem{definition}[theorem]{Definition}
\newtheorem{remark}[theorem]{Remark}
\newcommand{\R}{\ensuremath{\mathbb{R}}}
\newcommand{\Z}{\ensuremath{\mathbb{Z}}}
\newcommand{\Id}{\ensuremath{\mbox{\textbb{1}}}}
\newcommand{\rk}{\ensuremath{\mbox{rk}_n}}
\newcommand{\Braid}{\ensuremath{\mathfrak B}}
\newcommand{\hbeta}{\widehat{\beta}_{S^3}}
\newcommand{\hbetaB}{\widehat{\beta}_{S^3 \setminus B}}
\begin{document}
\bibliographystyle{plain}
\begin{abstract} We discuss how to apply work of L. Rudolph to braid conjugacy class invariants to obtain potentially effective obstructions to a slice knot being ribbon. We then apply these ideas to a family of braid conjugacy class invariants \cite{dt} coming from Khovanov-Lee theory and explain why we do not obtain effective ribbon obstructions in this case.
\end{abstract}

\maketitle
\bibliographystyle{plain}

\section{Introduction} \label{sec:intro}
Recall that a knot $K \subseteq (S^3 = \partial B^4)$ is said to be {\em slice}\footnote{We work in the smooth category throughout.} if $K$ bounds a properly imbedded disk in $B^4$.  $K$ is said to be {\em ribbon} if $K$ bounds an immersed disk in $S^3$ whose singularities are all of ribbon type, as in Figure \ref{fig:ribbon}.

By pushing neighborhoods of the ribbon singularities into $B^4$, one sees that every ribbon knot is slice. The converse statement is a long-standing conjecture of Fox \cite{Fox_Probs}:

\begin{conjecture}[Slice-Ribbon conjecture] If $K$ is slice, then $K$ is ribbon.
\end{conjecture}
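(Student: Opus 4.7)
The statement is the Slice-Ribbon conjecture of Fox, which remains open after six decades, so any proof sketch is necessarily speculative. Below I describe the approach I would pursue, consistent with the invariant-based strategy suggested by the abstract.

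The plan is to translate the problem into Morse theory on the slice disk. Given a slice knot $K$ with slice disk $D\subset B^4$, a small perturbation arranges that the radial distance function on $B^4$ restricts to a Morse function on $D$ with index-$0$, index-$1$, and index-$2$ critical points (minima, saddles, and maxima, respectively). A ribbon disk is precisely a slice disk whose Morse function admits a representative with no index-$2$ critical points, so proving the conjecture reduces to showing that every such $D$ is isotopic rel boundary to a disk with no local maxima.

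The first step would be to use Cerf theory to enumerate the handle cancellations needed to remove a single maximum, then argue by induction on the number of maxima that the cancellation is always possible. The algebraic obstruction to canceling an index-$2$ critical point with some index-$1$ critical point is the nonvanishing of a pairing in a Morse flow category attached to the movie of $\bdry D$. I would try to realize this pairing as a braid conjugacy class invariant of the kind produced by Rudolph's work and the Khovanov-Lee family referenced in the abstract, and then argue that the invariant vanishes on the boundary of any slice disk, using a direct computation analogous to the vanishing of Rasmussen's $s$-invariant on slice knots.

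The hard part, and the reason the conjecture has resisted so long, is controlling how handle slides among the index-$1$ critical points interact with the combinatorics of the boundary braid: a handle slide can create new maxima as quickly as it cancels old ones, and any proposed invariant is likely to obstruct only a single cancellation step rather than the net effect. Realistically, the most plausible outcome of carrying out this plan is \emph{not} a proof of Slice-Ribbon but the discovery of a concrete slice knot on which some braid invariant is nonzero, which would in fact be a counterexample. The abstract signals that this route, via the invariants discussed there, does not yield effective obstructions, and one purpose of the paper is to explain exactly why the Khovanov-Lee braid invariants fail to detect any gap between sliceness and ribbonness.
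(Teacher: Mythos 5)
There is nothing to compare your proposal against: the statement you were asked about is the Slice--Ribbon conjecture itself, which the paper states as an open conjecture of Fox and does not prove (indeed it cites proposed \emph{counterexamples}, and the body of the paper is about why a particular family of braid invariants fails even to obstruct ribbonness). So no proof exists in the paper, and your sketch is not a proof either --- a point you acknowledge, but which should be stated as the bottom line rather than folded into a speculative plan.

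Concretely, the gap in your plan is the entire middle: reducing to ``cancel all index-$2$ critical points of the radial Morse function on the slice disk'' is a correct reformulation (ribbon $=$ slice with no local maxima), but there is no known Cerf-theoretic or invariant-theoretic mechanism that performs the cancellation, and your proposed pairing ``in a Morse flow category attached to the movie of $\partial D$'' is not defined, let alone shown to vanish or to control handle slides. Moreover, your strategy is internally inconsistent with the goal: an invariant of the type discussed here (via Rudolph's Corollary \ref{cor:ribbon} and the $d_t$ family) can only ever \emph{obstruct} a slice knot from being ribbon, so if it were nonvanishing on some slice knot you would have disproved the conjecture, not proved it --- and Theorem \ref{thm:bestbound} together with Lemmas \ref{lem:sbadbound} and \ref{lem:wbadbound} shows these particular invariants give no such obstruction for knots of finite concordance order. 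The honest conclusion is that the conjecture remains open and your outline does not close it.
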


There are a number of proposed counterexamples, cf. \cite{GST_Prop2R}.

The current note has two modest goals:
\begin{enumerate}
	\item to advertise how work of L. Rudolph can be applied to braid conjugacy class invariants to in principle obstruct a slice knot from being ribbon,
	\item to explain why this will not work when the ideas in (1) are applied to the conjugacy class invariant defined and studied in \cite{dt},
\end{enumerate}

We are disappointed about (2) but hope that (1) may still prove useful.

\begin{figure}
\label{fig:ribbon}
\centering
\includegraphics[height=2in]{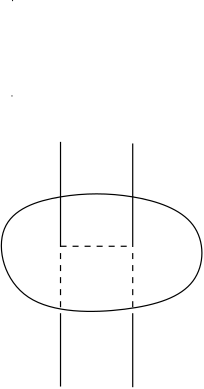}
\caption{A {\em ribbon type singularity } of an immersed surface $f: S \looparrowright S^3$ with $\partial S \neq \emptyset$ is an imbedded arc, $a \subset S^3$, of self-intersection whose preimage, $f^{-1}(a) \subset S$, has two imbedded connected components, one with boundary in $\partial S$ and one with boundary in $S \setminus \partial S$.}
\end{figure}

\subsection*{Acknowledgements}
This note is based on a talk the author delivered at the Georgia Topology Conference, 2017. She thanks Mark Hughes, whose questions following the talk prompted a deeper investigation, Kyle Hayden, with whom she studied the constructions of Rudolph described here, and Joan Birman, Tony Licata, and Stephan Wehrli for interesting conversations. 

\section{Obtaining ribbon obstructions from braid conjugacy class invariants}
Let $\Braid_n$ denote the $n$--strand braid group, with standard Artin generators $\sigma_1, \ldots \sigma_{n-1}$. In this paper, braid words will be read from left to right, and braid diagrams will be read (and oriented) from top to bottom, with the first strand on the left. Let $i_{n,n+k}: \Braid_n \rightarrow \Braid_{n+k}$ denote the standard inclusion of the $n$--strand braid group into the $(n+k)$--strand braid group that adjoins $k$ trivial strands on the right. We adopt Rudolph's conventions in \cite{Rudolph_SpecPos} and denote $i_{n,n+k}(\beta) \in \Braid_{n+k}$ by $\beta^{(k)}$. Let $\Braid := \bigcup_n \Braid_n$.

For $\beta \in \Braid_n$, we use $[\beta]$ to denote its conjugacy class in $\Braid_n$ and will denote its positive (resp., negative) Markov stabilization, $i_{n, n+1}(\beta)\sigma_n$ (resp., $i_{n,n+1}(\beta)\sigma_{n}^{-1}$), by $\beta^+$ (resp., by $\beta^-$).

For $\beta \in \Braid_n$, we use $\hbeta$ to denote its {\em braid-oriented} closure as a link in $S^3$ and $\hbetaB$ to denote its braid-oriented closure as a link in the complement of the braid axis, $B$. Of course, if $[\beta] = [\beta']$, then $\hbeta = \hbeta'$ as links in  $S^3$, but the converse is not generally true. On the other hand, $[\beta] = [\beta']$ iff $\hbetaB = \hbetaB'$.

We begin with a few definitions (cf. \cite{Rudolph_SpecPos}) in preparation for reminding the reader of two theorems of L. Rudolph.

\begin{definition} \label{defn:band} A positive (resp., negative) {\em band} in $\Braid_n$ is any braid $\beta \in \Braid_n$ for which $[\beta]= [\sigma_1]$ (resp., $[\beta] = [\sigma_1^{-1}]$). 
\end{definition}

\begin{remark} Any band can be written in the form $\omega \sigma_1 \omega^{-1}$, where $\omega$ is a word in the Artin generators. Note that $\sigma_i^\pm$ is a band for each $i$, via the conjugating word \[\omega = (\sigma_{i-1}\sigma_i)(\sigma_{i-2}\sigma_{i-1})\cdots(\sigma_1\sigma_2).\] 
\end{remark}

\begin{definition} \label{defn:bandrank} A {\em band presentation} of a braid $\beta$ is an explicit decomposition of $\beta$ as a product of bands: \[\beta = \prod_{j=1}^c \omega_j\sigma_{1}^{\pm}\omega_j^{-1},\] where, in addition, an explicit word in the Artin generators has been chosen for each of the conjugating braids, $\omega_j$. The {\em band rank} of a braid $\beta$, denoted $\rk(\beta)$, is the minimal $c \in \Z^{\geq 0}$ such that $\beta$ can be expressed as a product of $c$ bands. Note that $\rk(\beta)$ is a braid conjugacy class invariant.
\end{definition}

The nomenclature is justified by Rudolph's observation that if $\beta \in \Braid_n$ can be written as a product of $c$ bands, then the closure of $\beta$ bounds an obvious ribbon-immersed surface in $S^3$ with a disk ($0$--handle) for each strand of the braid and a band ($1$--handle) for each term in the product. This surface is in general not imbedded, since each band may intersect the $n$ disks, as well as itself. However, the resulting singularities will always be of ribbon type. By appropriately pushing neighborhoods of the ribbon singularities into $B^4$ one therefore obtains a properly-imbedded surface, Morse with respect to the radial function, with no maxima.

\begin{definition} \label{defn:braidedbanded} A {\em braided, banded surface} is a ribbon-immersed oriented surface in $S^3$ bounded by the closure of a band-presented braid $\beta \in \Braid_n$ as described above. Precisely, let $\beta = \prod_{j=1}^c \omega_j \sigma_1^{\pm}\omega_j^{-1}$ be a band presentation of $\beta$. Begin with the $n$ braid-oriented disjoint disks $D_1, \ldots, D_n$ bounded by the closure of the $n$--strand trivial braid $\Id_n$ and attach $c$ half-twisted bands $b_1, \ldots, b_c$ to $D_1, \ldots, D_n$ as follows.\footnote{The $j$th band, $b_j$, is a ribbon-immersed $2$--dimensional $1$--handle. We shall refer to the two attaching regions of each band as its {\em ends}. This should not be confused with the ends of the braid word $\omega_j\sigma_1^\pm\omega_j^{-1}$ defining the $j$th band. The geometric band is constructed by reading the braid word out from the center, rather than from left to right.} 
\begin{enumerate}
	\item One of the two ends of $b_j$ is attached to $D_1$. A neighborhood of this end looks like the standard oriented band bounded by $\sigma_1^\pm$ (with orientation induced by the orientation on $\sigma_1^{\pm}$).
	\item Construct the remainder of $b_j$ by extending the standard band above, reading the word $\omega_j$ backwards one letter at a time and forming the associated nested subconjugates of $\omega_j\sigma_1^{\pm}\omega_j^{-1}$. Note that as the band is extended in this way, it may intersect itself and the interiors of the disks $D_i$, but all of these intersections will be of ribbon type.
	\item Let $\pi:\Braid_n \rightarrow S_n$ be the standard projection of the braid group to the symmetric group on the same number of letters, and let $\pi_j$ be the image of $2$ under the permutation $\pi(\omega_j^{-1})$. Then the other end of $b_j$ will be attached to $D_{\pi_j}$.
\end{enumerate}
See Figure \ref{fig:Bands}.
\end{definition}

\begin{figure}
\label{fig:Bands}
\centering
\includegraphics[height=2.5in]{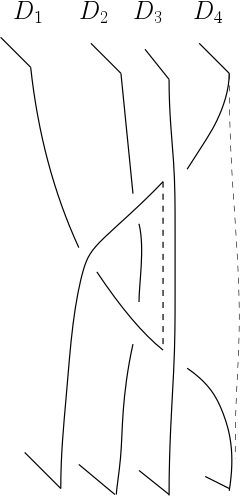}
\caption{The positively half-twisted band associated to the conjugate $\omega\sigma_1\omega^{-1} \in \Braid_4$, where $\omega = \sigma_3^{-1}\sigma_2$. One end of the band is attached to $D_1$, the other to $D_4$, and the band intersects $D_3$ in a ribbon singularity.}
\end{figure}

Theorems \ref{thm:ribbon} and \ref{thm:slice} and their corollaries are due to L. Rudolph.

\begin{theorem}\cite[Sec. 3]{Rudolph_BraidSurf} \label{thm:ribbon} Let $\Sigma \subset S^3$ be a ribbon-immersed orientable surface. Then $\Sigma$ is isotopic to a {\em braided, banded surface}. 
\end{theorem}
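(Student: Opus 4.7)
The plan is to bring $\Sigma$ into the required standard form via a two-stage normalization. Because $\Sigma$ is an orientable surface with $\bdry \Sigma \neq \emptyset$, it admits a handle decomposition with only $0$-handles (disks $D_1, \ldots, D_n$) and $1$-handles (bands $b_1, \ldots, b_c$). I first choose such a decomposition in which all ribbon singularities lie in the interiors of the disks and bands, away from the attaching arcs, and fix an unknotted braid axis $B \subset S^3$.

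Stage one is to braid the $0$-handles. The disks $D_1, \ldots, D_n$ bound an unlink in $S^3$; using a disk-extended form of Alexander's braiding theorem, I ambient-isotope $\Sigma$ in $S^3$ so that each $D_i$ becomes a flat disk transverse to $B$ whose boundary is one component of the closure of $\Id_n$. Since the isotopy is ambient, all self-intersections of $\Sigma$ remain of ribbon type.

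Stage two is to standardize the bands. For each $b_j$, an isotopy relative to the braided disks brings the half-twist of $b_j$ to strand $1$, matching $\sigma_1$ or $\sigma_1^{-1}$ according to the orientation of $\Sigma$. The rest of the band is then isotoped so that its core, read outward from the twist, traces a word $\omega_j$ in the Artin generators, exhibiting $b_j$ as the band associated to $\omega_j \sigma_1^{\pm} \omega_j^{-1}$ in the sense of Definition~\ref{defn:braidedbanded}. Doing this for each $b_j$ realizes $\Sigma$ as a braided, banded surface with boundary $\hbeta$, where $\beta = \prod_{j=1}^{c} \omega_j \sigma_1^{\pm} \omega_j^{-1}$.

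The main obstacle is the second stage: one must verify that each band, viewed as a ribbon-immersed $2$-dimensional $1$-handle with attaching regions on the braided $0$-handles, can be isotoped into the prescribed form without introducing non-ribbon self-intersections and without disturbing the braidedness of the $0$-handles. I would address this by first isotoping each band's core arc to be transverse to the horizontal foliation of $S^3 \setminus B$ induced by the braid axis, and then inductively simplifying; all newly-created intersections remain of ribbon type because the isotopies are ambient in $S^3$, and the braided structure of the $0$-handles is preserved because the isotopies can be taken to fix a neighborhood of each $D_i$.
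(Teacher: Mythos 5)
The paper offers no argument for this statement at all---it is quoted from \cite[Sec.~3]{Rudolph_BraidSurf}---so the question is whether your sketch actually proves it, and it does not: the decisive gap is that your construction is locked into exactly $n$ disks, where $n$ is the number of $0$--handles of the decomposition you fix at the outset. Stage one braids those $n$ disks into the disks bounded by $\widehat{\Id}_n$, and stage two standardizes the bands by isotopies fixing a neighborhood of those same disks, so the output would be a braided, banded surface whose boundary is the closure of an $n$--strand braid; hence $n$ must be at least the braid index of $\partial\Sigma$, which bears no relation to the number of $0$--handles. Concretely, let $\Sigma$ be the embedded annulus obtained from one disk by attaching one band whose core is tied in a trefoil: your decomposition has $n=1$, but $\Braid_1$ contains no bands at all, so no choice of words $\omega_j$ can produce this surface, whose boundary is a $2$--component cable of the trefoil rather than an unknot. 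The missing idea is that the handle decomposition must be allowed to change during the braiding: one introduces cancelling $0$/$1$--handle pairs, subdividing long, knotted, or badly linked bands into short pieces interleaved with new disks, so that the number of disks (equivalently the braid index) grows as the argument proceeds. This stabilization is the engine of Rudolph's proof, as of all Alexander-type braiding arguments, and without it the ``isotopy relative to the braided disks'' demanded in your stage two simply does not exist in general.

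Even granting enough disks, your stage two asserts rather than proves the theorem: ``inductively simplifying'' the bands is offered with no complexity measure, no list of local moves, and no termination argument, and this is precisely where all the work lies. The appeal to ambient isotopy is also self-defeating: an ambient isotopy of the whole configuration creates no new intersections whatsoever, while an isotopy fixing the disks can never change how a band threads through them, so a band that is knotted or linked with the disks in a non-standard pattern cannot be brought to the form of Definition~\ref{defn:braidedbanded} by such moves alone---again one must subdivide. Finally, the bands cannot be normalized one at a time in isolation: in the target picture they occupy disjoint levels, ordered as the factors of $\prod_{j}\omega_j\sigma_1^{\pm}\omega_j^{-1}$, each with an end on $D_1$ as required by Definition~\ref{defn:braidedbanded}, and your sketch never addresses this simultaneous arrangement or the resulting constraints on the attaching regions.
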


\begin{corollary} \label{cor:ribbon} If $K$ is ribbon, then for some $n$ there exists a braid $\beta \in \Braid_n$ with $\hbeta = K$ for which $\rk(\beta) = n-1$. 
\end{corollary}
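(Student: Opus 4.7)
The plan is to directly apply Theorem~\ref{thm:ribbon} to a ribbon disk for $K$ and then use an Euler characteristic bookkeeping argument to pin down the band rank. Concretely, if $K$ is ribbon, then $K$ bounds a ribbon-immersed disk $\Sigma \subset S^3$. By Theorem~\ref{thm:ribbon}, after an ambient isotopy we may assume $\Sigma$ is a braided, banded surface. This produces $\beta \in \Braid_n$ (for some $n$) together with a band presentation $\beta = \prod_{j=1}^c \omega_j\sigma_1^{\pm}\omega_j^{-1}$ whose associated surface is $\Sigma$, and in particular $\hbeta = \partial \Sigma = K$.

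Next I would compute the Euler characteristic. A braided, banded surface built from $n$ disks and $c$ bands has $\chi = n - c$. Since $\Sigma$ is a disk, $\chi(\Sigma) = 1$, forcing $c = n-1$. This gives the upper bound $\rk(\beta) \le n-1$.

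For the matching lower bound, suppose toward contradiction that $\beta$ admits a band presentation with $c' < n-1$ bands. The corresponding braided, banded surface $\Sigma'$ has the same boundary $\hbeta = K$ but $\chi(\Sigma') = n - c' \geq 2$. Since $K$ is a knot, the braid permutation $\pi(\beta)$ is an $n$-cycle, which one checks implies that the multigraph on the disks $D_1,\ldots,D_n$ whose edges are the bands is connected (otherwise the boundary would split into at least two components). Hence $\Sigma'$ is connected and orientable with a single boundary component, so $\chi(\Sigma') = 1 - 2g(\Sigma') \le 1$, contradicting $\chi(\Sigma') \ge 2$. Therefore $\rk(\beta) = n-1$.

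The main conceptual content is supplied by Theorem~\ref{thm:ribbon}; the only nontrivial bookkeeping step is the connectedness claim in the lower bound, which is where I would be most careful. One clean way to verify it is to observe that attaching a band either merges two boundary circles into one or splits one circle into two, so if the band multigraph has $k$ components, the boundary of the resulting surface has at least $k$ components; a knot boundary therefore forces $k=1$.
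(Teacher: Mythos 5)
Your proposal is correct and follows essentially the same route as the paper: apply Theorem~\ref{thm:ribbon} to a ribbon disk for $K$, count $\chi = n - c = 1$ to get the band presentation with $n-1$ bands, and rule out fewer bands using the fact that $\hbeta$ is a knot. Your connectedness/genus elaboration for the lower bound is just a more detailed version of the paper's one-line observation that a presentation with fewer than $n-1$ bands would force $\hbeta$ to be disconnected, which your final remark about bands changing the number of boundary components by one already captures.
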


\begin{proof} If $K$ is ribbon, then it bounds a ribbon-immersed orientable disk in $S^3$. By Theorem \ref{thm:ribbon}, this ribbon-immersed disk is isotopic to a braided, banded surface $S$ for some $\beta \in \Braid_n$ with $\hbeta = K$. This implies that $\beta$ has a band presentation with $n-1$ bands, so $\rk(\beta) \leq n-1$. But $\beta$ cannot be presented with fewer than $n-1$ bands, since this would force $\hbeta$ to be disconnected.
\end{proof}

\begin{theorem}\cite[Prop. 6.11]{Rudolph_SpecPos} \label{thm:slice} Let $\hbeta$ be a closed braid bounding a properly-imbedded oriented surface $S \subset B^4$. Then there exists $q \in \Z^{\geq 0}$ and some band presentation of $\beta^{(q)}$ for which the push-in to $B^4$ of the associated braided, banded surface is isotopic to $S$ with $q$ open disks removed.
\end{theorem}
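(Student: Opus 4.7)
The plan is to reduce the statement to a Morse-theoretic analysis of $S$ with respect to the radial distance function on $B^4$, where the role of $q$ becomes transparent as the number of index-$2$ critical points that need to be traded for extra boundary components.

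First, I would put $S$ in Morse position with respect to the radial function $f:B^4\to [0,1]$, so that $f|_S$ is Morse with critical points of indices $0$, $1$, $2$, and so that $\partial S = \hbeta$ is contained in $f^{-1}(1)$ and is a regular level set for $f|_S$. By a small further isotopy I can assume the $0$-handles (minima) lie below the $1$-handles (saddles), which lie below the $2$-handles (maxima), relative to the $f$-coordinate. Let $q$ be the number of index-$2$ critical points of $f|_S$.

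Next I would trade each maximum for an extra boundary component. For each index-$2$ critical point $p$ of $f|_S$, remove a small open disk around $p$; this introduces a new boundary circle contained in some level set $f^{-1}(c)$ with $c<1$. Pushing this new boundary radially outward to $\partial B^4 = S^3$ along an arc disjoint from the rest of $S$ and from $\hbeta$, I can arrange that each new boundary component is a small unknot in $S^3$, far from $\hbeta$. After doing this for all $q$ maxima, I obtain a properly imbedded surface $S' = S \setminus (q\text{ open disks})$ with $\partial S' = \hbeta \sqcup (q \text{ unknots})$ and with $f|_{S'}$ having no index-$2$ critical points. Situating the $q$ new unknots as small meridians of a disk intersecting the braid axis transversely, I can identify $\partial S'$ with the braid-oriented closure $\widehat{\beta^{(q)}}$ of the stabilization.

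Now I would use the open book structure on $B^4$ coming from the braid axis $B\subset S^3$ to put $S'$ in braided form. The disk $B^4 \cap \{\text{axis disk}\}$ extends $B$ radially, and $B^4$ minus this $2$-disk fibers over $S^1$ with half-space fibers. Following Rudolph, I would isotope $S'$ (rel boundary) to be transverse to each page of this open book and so that the intersection of $S'$ with each page is a disjoint union of arcs, except at the critical levels. Between successive saddle levels of $f|_{S'}$, the level link is a closed braid, and the surface between adjacent critical levels is a product times this braid; passage through a saddle corresponds to attaching a band whose core is an arc on a single page, and after conjugating the braid so that the band's endpoints lie on consecutive strands, the band is exactly of the form $\omega\sigma_1^\pm\omega^{-1}$. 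Reading these bands from outermost $f$-level inward, together with the $n+q$ disks produced by the minima at the innermost levels, yields a band presentation of $\beta^{(q)}$ whose associated braided, banded surface pushes in to $S'$ by construction.

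The main obstacle is the last step: carefully orchestrating the isotopy that makes the saddles appear as standard half-twisted bands relative to the open book without disturbing the identification of the boundary with $\widehat{\beta^{(q)}}$. Concretely, one must show that the arcs on the pages can be arranged to wind around the axis in a way compatible with a braid word $\omega_j$, and that the two ends of each band can be brought to strands $1$ and $2$ after conjugation. This amounts to a parameterized version of the braided-surface theorem (Theorem~\ref{thm:ribbon}) adapted to the $4$-dimensional setting, and is the technical heart of Rudolph's Proposition~6.11 in \cite{Rudolph_SpecPos}.
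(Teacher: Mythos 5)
The paper does not actually prove this statement---it is quoted from Rudolph (\cite[Prop.~6.11]{Rudolph_SpecPos})---so your proposal can only be judged on its own terms, and on those terms it has a genuine gap. The easy half of your outline (putting $S$ in Morse position, taking $q$ to be the number of index-$2$ critical points, puncturing each maximum and tubing the new boundary circles out to $S^3$ as $1$--braids about the axis, so that $\partial S'$ becomes $\widehat{\beta^{(q)}}$) is fine and is indeed the standard first move. But the decisive step---showing that the resulting no-maxima surface $S'$ is isotopic, keeping its boundary the closed braid $\widehat{\beta^{(q)}}$ (up to conjugacy), to the push-in of a braided, banded surface whose bands are conjugates $\omega_j\sigma_1^{\pm}\omega_j^{-1}$ multiplying to $\beta^{(q)}$---is only described, and you yourself flag it as ``the technical heart of Rudolph's Proposition 6.11.'' That step essentially \emph{is} the proposition: once the maxima are traded away, everything else is bookkeeping. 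In particular you cannot fall back on Theorem~\ref{thm:ribbon}, since the isotopy there is free to move the boundary and only preserves the boundary \emph{link}, whereas Corollary~\ref{cor:slice} needs a band presentation of the specific braid $\beta^{(q)}$ (band rank is a conjugacy-class invariant of the braid, not of its closure). So the relative/parameterized version you gesture at must actually be proved, not cited, for the argument to close.

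Two smaller points in the same part of the argument: the assertion that the $n+q$ disks of the model braided, banded surface are ``produced by the minima'' of $S'$ is not justified---$S'$ need not have exactly $n+q$ minima, and matching the handle structures requires creating or cancelling critical points during the braiding isotopy, which again is part of the missing step; and when you position the $q$ new boundary circles you must make them coherently oriented closed $1$--braids about the axis (each bounding a disk meeting $B$ once, with the braid-like orientation), split from $\hbeta$ in the complement of $B$, not merely ``small unknots far from $\hbeta$'' in $S^3$, since otherwise the boundary is only the link $\widehat{\beta^{(q)}}$ and not the closed braid. As written, your text is a correct reduction plus an appeal to the core of the result being proved, which is reasonable as a reading guide to Rudolph but does not constitute an independent proof.
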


\begin{corollary} \label{cor:slice} If $K$ is slice, then for some $n,q$ there exists a braid $\beta \in \Braid_n$ with $\hbeta = K$ for which \[\mbox{rk}_{n+q}\left(\beta^{(q)}\right) \leq (n-1)+2q.\]
\end{corollary}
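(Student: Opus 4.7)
The plan is to mirror the proof of Corollary \ref{cor:ribbon}, substituting Theorem \ref{thm:slice} for Theorem \ref{thm:ribbon} and then extracting the band bound via a handle count.

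Since $K$ is slice, fix a properly-imbedded disk $S \subset B^4$ with $\bdry S = K$, and choose any braid $\beta \in \Braid_n$ with $\hbeta = K$. Apply Theorem \ref{thm:slice} to produce $q \in \Z^{\geq 0}$ and a band presentation
\[
\beta^{(q)} = \prod_{j=1}^c \omega_j \sigma_1^{\pm} \omega_j^{-1}
\]
of $\beta^{(q)} \in \Braid_{n+q}$ whose associated braided, banded surface, pushed into $B^4$, is isotopic (rel boundary) to $S$ with $q$ open disks removed.

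Now compute Euler characteristics on both sides. The braided, banded surface of a $c$-band presentation of a braid in $\Braid_{n+q}$ is built, as an abstract surface with boundary, from $n+q$ disks ($0$-handles) and $c$ bands ($1$-handles), so has Euler characteristic $(n+q) - c$; this is preserved by the push-in, which merely resolves the ribbon singularities in the radial direction. On the other hand, $S$ with $q$ open disks removed has Euler characteristic $1 - q$. Equating yields $(n+q) - c = 1 - q$, i.e.\ $c = (n-1) + 2q$. Since the band rank is by definition the minimum, over all band presentations, of the number of bands, we conclude
\[
\mbox{rk}_{n+q}\bigl(\beta^{(q)}\bigr) \leq c = (n-1) + 2q,
\]
as claimed.

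The only nontrivial input is Theorem \ref{thm:slice}; once that is in hand the argument is a routine $\chi$-count, so I do not anticipate any real obstacle. Note that, in contrast with Corollary \ref{cor:ribbon}, no matching lower bound is needed here, because the corollary only asserts an upper bound on $\mbox{rk}_{n+q}(\beta^{(q)})$; the $q$ extra stabilizations are precisely what allow the push-in surface to accommodate the (possibly many) local minima of the original slice disk $S$ viewed through the radial Morse function on $B^4$.
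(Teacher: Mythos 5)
Your proposal is correct and takes essentially the same route as the paper: invoke Theorem \ref{thm:slice} to get a band presentation of $\beta^{(q)}$ whose push-in is the slice disk with $q$ open disks removed, then equate Euler characteristics, $(n+q)-c = 1-q$, to conclude the band count is $(n-1)+2q$ and hence bounds the band rank from above.
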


\begin{proof} Let $K$ be slice, and choose $\beta \in \Braid_n$ with $\hbeta = K$. Then $\hbeta$ bounds a slice disk in $B^4$. Now Theorem \ref{thm:slice} tells us there exists a $q$ for which $\hbeta^{(q)}$ bounds a braided, banded surface of Euler characteristic $1-q$ (its push-in is the slice disk with $q$ open disks removed). The associated band presentation of $\hbeta^{(q)}$ therefore has $b$ bands, where \[(n+q) - b = 1-q.\] The result follows.
\end{proof}

\begin{proposition} Let $\phi: \Braid \rightarrow \R$ be a real-valued braid invariant with the following properties.
\begin{enumerate}
	\item $\phi(\beta) = \phi(\beta')$ if $[\beta] = [\beta']$,
	\item $\phi(\beta^{\pm}) = \phi(\beta) + 1$,
	\item $\phi(\beta) \leq \rk([\beta])$,
	\item For some $n,k$, $\exists \,\, \beta \in \Braid_n$ satisfying $\phi\left(\beta^{(k)}\right) \neq \phi(\beta) + 2k$,
	\item There exists a knot $K$ with a braid representative $\beta \in \Braid_n$ satisfying $\phi(\beta) >n-1$.
\end{enumerate}
Then $\phi(\beta) - n$ is an oriented link invariant that has the potential to obstruct a slice knot from being ribbon. 
\end{proposition}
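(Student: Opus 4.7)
The plan is to read each of the five hypotheses as supplying one ingredient in a short assembly argument.

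First I would check that $\Phi(L) := \phi(\beta) - n$ is a well-defined invariant of oriented link types, where $\beta \in \Braid_n$ is any braid representative of $L = \hbeta$. By Markov's theorem, any two such representatives are related by a finite sequence of conjugations in a common $\Braid_n$ (handled by (1)) and Markov (de)stabilizations $\beta \leftrightarrow \beta^\pm$. For each stabilization, condition (2) gives
\[
\phi(\beta^\pm) - (n+1) = \bigl(\phi(\beta)+1\bigr) - (n+1) = \phi(\beta) - n,
\]
so $\Phi$ is unchanged. Hence $\Phi$ descends to an invariant of oriented links.

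Next I would establish the ribbon obstruction. If $K$ is a ribbon knot, Corollary \ref{cor:ribbon} supplies $\beta \in \Braid_n$ with $\hbeta = K$ and $\rk(\beta) = n-1$; hypothesis (3) then forces $\phi(\beta) \leq n-1$, i.e.\ $\Phi(K) \leq -1$. Contrapositively, any knot $K$ with $\Phi(K) > -1$ fails to be ribbon. Hypothesis (5) produces a specific knot $K_0$ with a braid representative $\beta \in \Braid_n$ satisfying $\phi(\beta) > n-1$, hence $\Phi(K_0) > -1$, so $\Phi$ is genuinely nontrivial as a ribbon obstruction.

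The step demanding the most care is explaining why this ribbon obstruction is not already implied by sliceness, i.e.\ why $\Phi$ retains the potential to distinguish slice knots that are not ribbon. If $K$ is slice, Corollary \ref{cor:slice} furnishes $\beta \in \Braid_n$ with $\hbeta = K$ and $q \in \Z^{\geq 0}$ such that $\mbox{rk}_{n+q}(\beta^{(q)}) \leq (n-1)+2q$; hypothesis (3) then yields
\[
\phi\bigl(\beta^{(q)}\bigr) - (n+q) \leq -1 + q.
\]
If one additionally had $\phi(\beta^{(k)}) = \phi(\beta) + 2k$ for every $\beta$ and every $k$, the left-hand side would collapse to $\Phi(K) + q$ and the slice bound would degenerate to $\Phi(K) \leq -1$, killing any hope of $\Phi$ detecting a slice knot as non-ribbon. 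Condition (4) is precisely the assertion that this universal identity fails, so the collapse does not occur and $\Phi$ may in principle strictly exceed $-1$ on some slice knot. The main obstacle is interpretive rather than technical: each of the five conditions plays a tightly scripted role, and the point of (4) in particular is subtle, since it is used not to exhibit an actual slice, non-ribbon witness but to foreclose the one obvious mechanism that would make $\Phi$ automatically useless for the slice-ribbon conjecture.
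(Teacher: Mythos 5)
Your proposal is correct and follows essentially the same route as the paper: Markov's theorem plus properties (1) and (2) give well-definedness of $\phi(\beta)-n$, Corollary \ref{cor:ribbon} with (3) and (5) gives the ribbon obstruction, and Corollary \ref{cor:slice} with (3) and (4) shows the slice bound need not force the invariant below the ribbon threshold. Your spelled-out computation of why the identity $\phi(\beta^{(k)})=\phi(\beta)+2k$ would collapse the slice bound to $\Phi(K)\leq -1$ is a slightly more explicit rendering of the role of hypothesis (4) than the paper's one-line remark, but it is the same argument.
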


\begin{proof} The Alexander and Markov theorems (cf. \cite{Birman_book}) tell us that any (oriented) link is isotopic to a (braid-oriented) closed braid and that any two closed braid representatives of the same link are related by a finite sequence of closed braid isotopies (conjugations) and Markov de/stabilizations. Properties (1) and (2) then imply that $\phi(\beta) - n$ is an oriented link invariant.

Property (5) tells us that there exists a knot $K$ with a braid representative $\beta \in \Braid_{n_0}$ (for some $n_0$) satisfying $n_0-1 < \phi(\beta)$.  Properties (1) and (2) and the Markov theorem then tell us that for {\em every} braid representative $\beta \in \Braid_n$ of $K$ (for {\em every} $n$), $\phi(\beta) > n-1$. Property (3) and Corollary \ref{cor:ribbon} then tells us that $K = \hbeta$ cannot be ribbon.

Finally, we must show that it's possible for $\phi$ to obstruct a {\em slice} knot from being ribbon. Let $K$ be a slice knot and $\beta \in \Braid_n$ a braid for which $\hbeta = K$. Then Corollary \ref{cor:slice} tells us that for some $q$ we have $\mbox{rk}_{n+q}\left(\hbeta^{(q)}\right) \leq (n-1)+2q.$ Property (3) then tells us that $\phi(\beta^{(q)}) \leq (n-1) + 2q$, and Property (4) tells us that it's possible for $\phi(\beta) > n-1$.
\end{proof}

\section{The annular Rasmussen invariant does not give new ribbon obstructions}

In \cite{dt} we define, for each braid $\beta \in \Braid$ a family, $d_t(\hbetaB)$ for $t \in [0,1]$, of real-valued braid conjugacy class invariants.\footnote{In \cite{dt}, $d_t(\hbetaB)$ is defined for $t \in [0,2]$. However, the relevant interval is $t \in [0,1]$ because the invariant is symmetric about $t=1$ (cf. \cite[Thm. 1]{dt}).} These braid conjugacy class invariants are defined using the structure of the annular Khovanov-Lee complex of $\hbeta$ as a $(\Z \oplus \Z)$--filtered chain complex.\footnote{In \cite{dt}, $d_t$ is defined as an invariant of oriented annular links, but it is noted there that the closure of a braid comes equipped with a standard orientation that we call the braid-like orientation, see \cite[Sec. 3.2]{dt}, and two braid-oriented braid closures are isotopic as annular links iff they are conjugate in $\Braid$.} We recall here some of its relevant properties.

\begin{theorem} \cite[Thm. 1, Cor. 3, Thm. 6]{dt} \label{thm:dtprops} Let $\beta \in \Braid_n$ be an n-strand braid equipped with the braid-like orientation.  Viewed as a function from $[0,1]$ to $\R$, $d_t(\hbetaB)$ satisfies the following properties:
\begin{enumerate}
	\item $d_t(\hbetaB)$ is a conjugacy class invariant,
	\item $d_t(\hbetaB)$ is continuous and piecewise linear, with slopes \[m_t(\hbetaB) \in \{-n, -n+2, \ldots, n-2, n\},\] 
	\item $d_0(\hbetaB) = s(\hbeta) - 1$,
	\item $d_1(\hbetaB) = w(\hbetaB)$
	\item $|d_t(\hbetaB) - d_t(\widehat{\Id}_n)| \leq \rk(\hbetaB)$
	\item $d_t(\widehat{\Id}_n) = n(t-1)$
\end{enumerate}
\end{theorem}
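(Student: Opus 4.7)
The statement is a compilation of results from the author's earlier paper \cite{dt}, so the plan is to trace each item to its source in that paper and outline the underlying chain-level mechanism. The central object is the annular Khovanov--Lee complex $\CKh(\hbeta)$, which is $(\Z \oplus \Z)$--filtered by the quantum and annular gradings; the function $d_t(\hbetaB)$ is extracted from this filtered complex via a one-parameter family of filtration-level computations applied to the distinguished Lee homology class associated with the braid-like orientation on $\hbeta$.

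For (1), I would argue that braid conjugation induces an annular isotopy of $\hbetaB$ inside $S^3 \setminus B$, and that the filtered chain homotopy type of $\CKh$ is invariant under such isotopies, so $d_t$ descends to a conjugacy class invariant. For (3) and (4), the endpoint behavior reflects specialization of $t$: at $t=0$ the relevant filtration collapses to the quantum filtration, so the filtration level of the canonical Lee class recovers Rasmussen's $s(\hbeta)$ (the $-1$ shift is a normalization convention coming from how $s$ is extended to links), while at $t=1$ it collapses to the annular filtration, whose value on the braid-oriented Lee generator reads off the braid writhe $w(\hbetaB)$.

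For (2), continuity and piecewise linearity follow from the fact that $d_t$ is computed as an extremum over finitely many affine functions of $t$; the allowed slopes lie in $\{-n, -n+2, \ldots, n\}$ because the annular grading on a generator of $\CKh(\hbeta)$ is bounded in absolute value by $n$ and has parity $n \pmod 2$. For (6), I would compute directly: $\widehat{\Id}_n$ is $n$ parallel copies of the core circle of the annulus, so $\CKh(\widehat{\Id}_n)$ is the $n$-fold tensor product of the one-component case, and the canonical Lee generator's bifiltration data yields $d_t(\widehat{\Id}_n) = n(t-1)$ by a direct calculation.

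The main obstacle, and the technical heart of the proof, is item (5). The band-rank inequality requires showing that the saddle cobordism induced by a single band shifts the value of $d_t$ by at most $1$. Concretely, one shows that the induced map on annular Khovanov--Lee complexes is filtered of degree at most $1$ with respect to the $t$-parameter family of filtrations; iterating this bound across a band presentation with $c$ bands then yields $|d_t(\hbetaB) - d_t(\widehat{\Id}_n)| \leq c$, and minimizing over band presentations of braids in the conjugacy class of $\beta$ gives the stated inequality. This is precisely the ingredient that, together with Rudolph's corollaries above, qualifies a suitable normalization of $d_t(\hbetaB)$ as a potential ribbon obstruction in the sense of the preceding proposition.
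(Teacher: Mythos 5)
This theorem is not proved in the present paper at all: it is imported wholesale from \cite{dt} (Thm.~1, Cor.~3, Thm.~6), so the ``proof'' here is the citation itself, and your plan of re-deriving each item is a reconstruction of the arguments of \cite{dt} rather than of anything in this text. As such a reconstruction, items (1), (2), (3), and (6) are described essentially correctly: $d_t$ is the filtration level of the braid-oriented Lee class with respect to the one-parameter family of gradings mixing the quantum grading $j$ and the annular grading $k$, conjugation is an annular isotopy, piecewise linearity comes from finitely many affine-in-$t$ degrees with slopes constrained by the annular weights, and the trivial braid computation gives $n(t-1)$. (One small imprecision in (3): the $-1$ is not a convention about extending $s$ to links; already for knots the quantum filtration level of the Lee cycle itself is $s-1$, which is exactly what $d_0$ records.)

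Two points are more serious. For (4), the mechanism you give is wrong: at $t=1$ the relevant grading is the combination $j-k$ (the symmetry point of the $t\in[0,2]$ family), not the annular grading alone. If the filtration really collapsed to the annular filtration, its value on the braid-oriented Lee generator would be $\pm n$, not the writhe. What actually happens is that every term of the Lee cycle over the braid-oriented resolution has the same value of $j-k$, namely $w(\hbetaB)$, because the per-circle contributions of $v_\pm$ cancel at $t=1$ and only the diagrammatic writhe shift survives; and even then this only gives $d_1(\hbetaB)\geq w(\hbetaB)$, so the equality in \cite{dt} needs the further argument that no homologous representative has strictly larger filtration level. For (5), asserting that the saddle map is ``filtered of degree at most $1$'' only controls the filtration level of the image of a class, hence bounds $d_t$ from one side; the two-sided inequality $|d_t(\hbetaB)-d_t(\widehat{\Id}_n)|\leq \rk(\hbetaB)$ requires the genuinely Lee-theoretic input that the band cobordism between $\widehat{\Id}_n$ and $\hbetaB$, read in \emph{either} direction, induces a filtered map carrying the braid-oriented Lee class to a nonzero multiple of the braid-oriented Lee class, so the estimate can be run both ways. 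Your sketch leaves this key ingredient implicit, and the stated mechanism for (4) would fail as written.
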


In the above, $s(\hbeta)$ is Beliakova-Wehrli's generalization \cite{BW} of Rasmussen's invariant of knots \cite{Rasmussen_Slice} to an invariant of oriented links in $S^3$, and $w(\hbetaB)$ is the writhe of $\hbetaB$ equipped with the braid-orientation. In the literature, $w(\hbetaB)$ is sometimes called the {\em exponent sum}.

Recall that we're trying to apply the lower bound on band rank coming from the invariant $d_t(\hbetaB)$ to find an obstruction to a slice knot $K$ being ribbon. Rudolph's Corollary \ref{cor:ribbon} tells us that if we can find a lower bound on the band rank that is well-behaved enough under Markov de/stabilization, it might yield an effective ribbon obstruction.

Unfortunately, Theorem \ref{thm:bestbound} below tells us that the best lower bound on band rank coming from the $d_t$ invariant occurs at either $t=0$ or $t=1$. Moreover, Lemmas \ref{lem:sbadbound} and \ref{lem:wbadbound} tell us that neither of these invariants is strong enough to obstruct a slice knot from being ribbon. 

Indeed, none of these invariants is strong enough to obstruct a knot of {\em finite concordance order} from being ribbon. Recall that the concordance order of an oriented knot $K$ is the minimal $n \in \Z^{\geq 0}$ for which the connected sum of $n$ copies of $K$ with itself is slice. Theorem \ref{thm:bestbound} combined with Lemmas \ref{lem:sbadbound} and \ref{lem:wbadbound} tell us that if $d_t(\hbetaB)$ obstructs $\hbeta = K$ from being ribbon, then $K$ has infinite order in the smooth knot concordance group. 

\begin{theorem} \label{thm:bestbound} Let $\beta \in \Braid_n$ be an $n$-strand braid whose closure, $K = \hbeta$, has finite concordance order. Then \[\max_{t\in [0,1]} |d_t(\hbetaB) - d_t(\widehat{\Id}_n)|\] occurs at either $t=0$ or $t=1$.
\end{theorem}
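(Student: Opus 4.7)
The plan is to reduce Theorem~\ref{thm:bestbound} to a simple monotonicity argument applied to the function
\[f(t) := d_t(\hbetaB) - d_t(\widehat{\Id}_n),\quad t \in [0,1].\]
By property (2) of Theorem~\ref{thm:dtprops}, $d_t(\hbetaB)$ is continuous and piecewise linear with all slopes in $\{-n, -n+2, \ldots, n\}$, while property (6) gives $d_t(\widehat{\Id}_n) = n(t-1)$, which has constant slope $n$. Subtracting, $f$ is continuous and piecewise linear with every slope in $\{-2n, -2n+2, \ldots, -2, 0\}$, so $f$ is non-increasing on $[0,1]$.

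Next I would invoke the elementary fact that the absolute value of a continuous non-increasing function on $[0,1]$ attains its maximum at one of the two endpoints. Indeed, if $f$ does not change sign then $|f|$ is itself monotone; otherwise $f(0) \geq 0 \geq f(1)$, in which case $|f|$ is non-increasing on the subinterval where $f \geq 0$ and non-decreasing where $f \leq 0$, so
\[\max_{t\in[0,1]} |f(t)| = \max\bigl(f(0),\, -f(1)\bigr).\]
Either way, the maximum occurs at $t = 0$ or $t = 1$, which is what is asserted.

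There is no real obstacle here: the conclusion is immediate from the structural properties of $d_t$ quoted in Theorem~\ref{thm:dtprops}, and in particular from the fact that $d_t(\widehat{\Id}_n)$ achieves the extremal slope allowed by property (2). The finite-concordance-order hypothesis plays no role in the monotonicity step itself; it appears to be stated in order to motivate the downstream use of the theorem, where one would combine the identifications $f(0) = s(\hbeta) + n - 1$ (from property (3)) and $f(1) = w(\hbetaB)$ (from property (4)) with the fact that $s(K) = 0$ for knots of finite concordance order, and then feed these endpoint values into Lemmas~\ref{lem:sbadbound} and \ref{lem:wbadbound} to rule out a ribbon obstruction.
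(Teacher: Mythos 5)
Your argument is correct, and it is in fact a cleaner and more general route than the one the paper takes, even though both ultimately rest on the same key fact: that $d_t(\widehat{\Id}_n)=n(t-1)$ realizes the extremal slope $n$ allowed by Property (2), so that $f(t)=d_t(\hbetaB)-d_t(\widehat{\Id}_n)$ has only nonpositive slopes. The paper does not exploit this monotonicity globally. Instead it first uses the finite-concordance-order hypothesis (via $s(\hbeta)=0$ and Property (3)) to pin down $f(0)=n-1>0$, then argues by contradiction: assuming the maximum of $|f|$ is not at $t=0$, it shows the offending value must have $f(t')<0$ (using the bound $d_t(\hbetaB)\leq -1+nt$, itself a consequence of the same slope bound), produces by the intermediate value theorem a last zero $t_0$ of $f$, and only then applies the monotonicity of $(nt-n)-d_t(\hbetaB)$ on $[t_0,1]$ to push the maximum to $t=1$. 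You observe that the single comparison $m_t(\hbetaB)\leq n$ already makes $f$ non-increasing on all of $[0,1]$, and that the absolute value of a continuous monotone function is maximized at an endpoint; this yields the conclusion directly, with no case analysis by contradiction and, notably, with no use of the finite-concordance-order hypothesis at all. So your proof actually establishes the statement for arbitrary braid closures; as you correctly note, the hypothesis only matters downstream, where $s(K)=0$ is fed into the endpoint evaluations $f(0)=s(\hbeta)+n-1$ and $f(1)=w(\hbetaB)$ in Lemmas~\ref{lem:sbadbound} and~\ref{lem:wbadbound}. The paper's version buys nothing extra beyond making the endpoint value $f(0)=n-1$ explicit along the way; your version buys brevity and a strictly stronger statement.
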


\begin{proof} Theorem \ref{thm:dtprops}, Property (6), tells us that $d_t(\widehat{\Id}_n) = nt - n$. 

Since $\hbeta$ has finite concordance order, $s(\hbeta) = 0$, hence $d_0(\hbetaB) = s(\hbeta) -1 = -1$, and hence when $t=0$, $|d_t(\hbetaB) - d_t(\widehat{\Id}_n)| = n-1$.  Since $d_t(\hbetaB)$ is piecewise linear and $-n \leq m_t(\hbetaB) \leq n$, we know that \[-1 - nt \leq d_t(\hbetaB) \leq -1 + nt.\]

Assume that $\max_{t \in [0,1]}|d_t(\hbetaB) - d_t(\widehat{\Id}_n)|$ does not occur at $t=0$. This tells us that $\exists \,\, t' \in (0,1)$ with $n-1 < |d_{t'}(\hbetaB) - (nt' -n)|$. Moreover, it must be the case that \[d_{t'}(\hbetaB) - (nt'-n) < 0,\] since $d_t(\hbetaB) \leq -1 + nt$, hence $d_t(\hbetaB) - (nt-n) \leq n-1.$

But since $d_0(\hbetaB) - d_0(\widehat{\Id}_n )>0$ and $d_{t'}(\hbetaB) - d_{t'}(\widehat{\Id}_n) < 0$, there exists at least one $t \in (0,1)$ with \[d_t(\hbetaB) = d_t(\widehat{\Id}_n) = nt-n.\] Let $t_0$ be the maximum such $t \in (0,1)$.

We now claim that on every closed interval $[a,b] \subset [t_0, 1]$ on which $d_t(\hbetaB)$ is differentiable on its interior, \[\max_{t \in [a,b]} (nt-n) - d_t(\hbetaB)\] occurs at $t=b$.

This is because $m_t(\hbetaB) \leq n$, so $(nt-n) - d_t(\hbetaB)$ is monotonically non-decreasing on any such differentiable interval.

It follows in this case that \[\max_{t \in [0,1]} |d_t(\hbetaB)-d_t(\widehat{\Id}_n)|\] occurs at $t=1$.

\end{proof}

\begin{lemma} \label{lem:sbadbound} Let $\beta \in \Braid_n$ be an $n$-strand braid whose closure, $\hbeta$, is a knot $K$. If $K$ has finite concordance order, then $|d_0(\hbetaB) - d_0(\widehat{\Id}_n)| = n-1$.
\end{lemma}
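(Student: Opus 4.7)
The plan is to directly chase the two explicit values given by Theorem \ref{thm:dtprops} at $t=0$ and invoke the standard fact that the Rasmussen invariant vanishes on concordance-torsion knots.

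First I would evaluate both sides at $t=0$ using the formulas already recorded. By Property (6) of Theorem \ref{thm:dtprops}, $d_0(\widehat{\Id}_n) = n(0-1) = -n$. By Property (3), $d_0(\hbetaB) = s(\hbeta) - 1 = s(K) - 1$. So the claim reduces to showing $s(K) = 0$.

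Next I would invoke the fact that Rasmussen's $s$--invariant descends to a homomorphism $s : \mathcal{C} \to \Z$ from the smooth knot concordance group to $\Z$ (this is Rasmussen's original result; since $K$ is a knot, there is no issue involving the Beliakova--Wehrli extension to links). Any element of finite order in $\mathcal{C}$ must be killed by any homomorphism to $\Z$, so $s(K) = 0$.

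Combining these gives $d_0(\hbetaB) = -1$ and $d_0(\widehat{\Id}_n) = -n$, so
\[
|d_0(\hbetaB) - d_0(\widehat{\Id}_n)| = |-1 - (-n)| = n-1,
\]
as desired. There is essentially no hard step here; the only nontrivial input is the concordance-homomorphism property of $s$, which I would simply cite to \cite{Rasmussen_Slice}.
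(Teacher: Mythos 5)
Your proposal is correct and matches the paper's argument: both evaluate $d_0$ via Properties (3) and (6) of Theorem \ref{thm:dtprops} and use that $s$ is a concordance homomorphism to $\Z$ (so $s(K)=0$ for finite-order $K$), citing \cite{Rasmussen_Slice}. No differences worth noting.
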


\begin{proof} If $K$ has finite concordance order, then $s(K) = d_0(\hbetaB) + 1 = 0$, by \cite{Rasmussen_Slice}. The result then follows from Theorem \ref{thm:dtprops}, Property (6).
\end{proof}

\begin{lemma} \label{lem:wbadbound} Let $\beta \in \Braid_n$ be an $n$-strand braid whose closure, $\hbeta$, is a knot $K$. If $K$ has finite concordance order, then $|d_1(\hbetaB) - d_1(\widehat{\Id}_n)| \leq n-1$.
\end{lemma}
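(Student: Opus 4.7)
The plan is to reduce the statement to Rasmussen's slice-Bennequin inequality. By Properties (4) and (6) of Theorem \ref{thm:dtprops},
\[
d_1(\hbetaB) - d_1(\widehat{\Id}_n) = w(\hbetaB) - n(1-1) = w(\hbetaB),
\]
so the inequality to prove is $|w(\hbetaB)| \leq n-1$ whenever $\hbeta = K$ has finite concordance order.

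For the upper bound, I would invoke Rasmussen's slice-Bennequin inequality \cite{Rasmussen_Slice}: for any $\beta \in \Braid_n$ with braid-oriented closure $\hbeta$,
\[
w(\hbetaB) - n + 1 \leq s(\hbeta).
\]
Since $s$ is additive under connect sum and vanishes on slice knots, it vanishes on every knot of finite concordance order (exactly as already used in Lemma \ref{lem:sbadbound}). Thus $s(K)=0$, yielding $w(\hbetaB) \leq n-1$.

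For the lower bound, I would apply the same inequality to the mirror knot $\bar{K}$. Inverting each Artin generator in a word for $\beta$ produces a braid $\bar\beta \in \Braid_n$ whose braid-oriented closure is $\bar{K}$, and this inversion sends each $\pm$ crossing to a $\mp$ crossing, so the exponent sums satisfy $w(\widehat{\bar\beta}_{S^3\setminus B}) = -w(\hbetaB)$. Since $s(\bar K) = -s(K) = 0$, slice-Bennequin applied to $\bar\beta$ gives $-w(\hbetaB) \leq n-1$, i.e.\ $w(\hbetaB) \geq -(n-1)$.

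Combining the two bounds gives $|w(\hbetaB)| \leq n-1$, as required. I do not anticipate any genuine obstacle: the argument amounts to citing slice-Bennequin and applying it to both $\beta$ and the mirror $\bar\beta$, together with the standard fact that the Rasmussen invariant vanishes on concordance-torsion.
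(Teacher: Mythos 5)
Your proof is correct and follows essentially the same route as the paper: the paper reduces to the writhe bound via Properties (4) and (6) and then simply cites \cite[Lem.~2.1]{Lisca}, whose proof is exactly the two-sided slice-Bennequin argument you write out. The only difference is that you make the citation self-contained, deriving $|w(\hbetaB)| \leq n-1$ directly from the $s$-invariant form of slice-Bennequin \cite{Rasmussen_Slice} applied to $\beta$ and its mirror, together with the vanishing of $s$ on knots of finite concordance order.
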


\begin{proof} This follows immediately from Theorem \ref{thm:dtprops}, Properties (4) and (6), and \cite[Lem. 2.1]{Lisca}, which says that the absolute value of the {\em writhe} of an $n$--braid whose closure has finite concordance order cannot exceed $n-1$. Lisca's lemma in turn relies on the slice-Bennequin inequality \cite{Rudolph_QPSlice}.
\end{proof}

\bibliography{BraidedBanded}
\end{document}